\newtheorem{theorem}{Theorem}[section]
\newtheorem{proposition}{Proposition}
\newtheorem{definition}[theorem]{Definition}
\newcommand{\eps}{\varepsilon}
\newcommand{\Z}{\mathbb{Z}}
\begin{document}

\title{Arithmetic Progressions in Sumsets of Sparse Sets}
\author{Noga Alon\footnote{Department of Mathematics, Princeton University, Princeton, NJ, USA. Email: \texttt{nalon@math.princeton.edu}. Research supported in part by NSF grant DMS-1855464 and the Simons Foundation.} , 
Ryan Alweiss\footnote{Department of Mathematics, Princeton University, Princeton, NJ, USA. Email: \texttt{alweiss@math.princeton.edu}. Research supported by the NSF Graduate Fellowship (GRFP).} , 
Yang P. Liu\footnote{Department of Mathematics, Stanford University, Stanford, CA, USA. Email: \texttt{yangpliu@stanford.edu}. Research supported by the Department of Defense (DoD) through the National Defense Science and Engineering Graduate Fellowship (NDSEG) Program.} , 
Anders Martinsson\footnote{Institut f\"ur Theoretische Informatik, ETH Z\"urich, Z\"urich, Switzerland. Email: \texttt{anders.martinsson@inf.ethz.ch}.} , 
Shyam Narayanan\footnote{Department of Electrical Engineering and Computer Science, Massachusetts Institute of Technology, Cambridge, MA, USA. Email: \texttt{shyamsn@mit.edu}. Research supported by the NSF Graduate Fellowship (GRFP) and 
a Simons Investigator Award.}}
\date{\today}

\maketitle

\begin{abstract}
A set of positive integers $A \subset \Z_{> 0}$
is \emph{log-sparse} if there is
an absolute constant $C$ so that for any positive integer $x$
the sequence contains
at most $C$ elements in the interval $[x,2x)$.
In this note we study arithmetic progressions in sums
of log-sparse subsets of $\Z_{> 0}$. We prove that for any
log-sparse subsets $S_1, \dots, S_n$ of $\Z_{> 0},$ the sumset $S = S_1
+ \cdots + S_n$ cannot contain an arithmetic progression of size greater
than $n^{(1+o(1))n}.$ We also show that this is nearly tight by
proving that there exist log-sparse sets $S_1, \dots, S_n$ such that $S_1
+ \cdots + S_n$ contains an arithmetic progression of size $n^{(1-o(1)) n}.$
\end{abstract}

\section{Introduction}
Arithmetic progressions have been one of the favorite research 
topics of Ron Graham. See \cite{Gr} for a lecture he has given
on the subject. The term ``arithmetic
progression'' appears in the title of seven of his papers, and he has
written, with Andr\'as Hajnal, the proof of Szemer\'edi's Theorem
on arithmetic progressions in sets of integers of 
positive upper density \cite{Sz}. 
In the present note, dedicated to his memory,
we study the maximum possible length
of arithmetic progressions in sumsets of very sparse sets.

Waring's problem, first proven by Hilbert \cite{Hi}, 
states that there exists a function $f(k)$ so that any positive 
integer can be written as the sum of at most $f(k)$ perfect 
$k$-powers. From a crude heuristic perspective, 
the density of perfect $k$-powers makes this result 
plausible. As the number of ways to write integers between $1$ and
$n$  
as sums of $r$ perfect $k$-powers is 
asymptotically $\Theta\left(n^{r/k}\right)$ if $r$ and $k$ are
fixed, 
on average one may expect any (large) $n$ to have some such representation 
as long as $r>k.$ However, for many values of $k$ there are 
congruence obstructions, so that certain
arithmetic progressions cannot be reached by a sum of $k+1$  $k$-th 
powers.
In the literature on Waring's problem, the worst cases of the congruence
obstructons are summarized in a variable $\Gamma(k)$ and the
common belief is that any large $n$ has a representation provided
$r \geq max(k+1, \Gamma(k))$.

In this note, instead of perfect $k$-powers, we
consider sums
of sets with much lower density. Namely, we consider logarithmically
sparse sets, or sets of positive integers where the number of elements
less than $n$ grows as $\log n$ or slower rather than as a fractional
power of $n$, and their sumsets. We define a log-sparse set and the
sumset of sets formally as follows.

\begin{definition} \label{def:logsparse}
A subset $T$ of $\mathbb{Z}_{>0}$ is ($C$-)\emph{log-sparse} 
if for all positive integers $x$, $|T \cap [x,2x)| \le C$.
\end{definition}

\begin{definition} \label{def:sumset}
Given sets $S_1, S_2, \dots, S_n \subset \mathbb{Z}_{>0},$ the set $S
= S_1 + S_2 + \dots + S_n$ is the \emph{sumset} of $S_1, S_2, \dots,
S_n$ if $S$ is the set of all positive integers $x$ such that $x =
x_1 + x_2 + \dots + x_n$ for some $x_1 \in S_1, x_2 \in S_2, \dots,
x_n \in S_n$.
\end{definition}

We note that the specific constant $C$ in Definition \ref{def:logsparse} 
is not crucial as long as it is at least $2$.

Note that it is impossible for all integers to be in the sumset of $r$
log-sparse sets for any fixed $r$, as by a counting argument, such a
sumset cannot contain more than $(O(\log N))^{r}$ integers below $N.$
Here we consider the maximum possible length of
\emph{arithmetic progressions} in sumsets
of such logarithmically sparse sets.

Sizes of arithmetic progressions have been well-studied in various
cases. In particular, a well-known result of Szemer\'edi \cite{Sz} 
shows that
any subset $A$ of $\mathbb{Z}_{>0}$ 
with positive upper density
contains arbitrarily long arithmetic progressions. Even
for sparser sets, such as the set of primes, it is known that they
contain 
arbitrarily long arithmetic progressions \cite{GT}, and a well-known
conjecture due to Erd\H{o}s states that as long as $A = \{a_1, a_2,
\dots \}$ satisfies $\sum \frac{1}{a_i} = \infty,$ $A$ contains
arbitrarily long arithmetic progressions.

Arithmetic progressions in sumsets have also been studied.
Bourgain \cite{Bo} proved that when $|A|=\alpha N$
and $|B|=\beta N$ are subsets of $[N]=\{1,2, \ldots ,N\}$, 
$A+B$ must contain an arithmetic
progression of size at least $\exp(\Omega_{\alpha,\beta}(\log n)^{1/3}))$.
Bourgain's result was subsequently improved by Green \cite{Green} 
and by Croot, Laba, and Sisak \cite{CLS} to $A+B$ containing 
an arithmetic progression of size at 
least $\exp(\Omega_{\alpha,\beta}(\log n)^{1/2}))$.

Sumsets of log-sparse sets do not have positive density, but 
trivially there
do exist sparse sets containing arbitrarily long arithmetic progressions,
such as the set $S = \{2^a + b: 1 \le b \le a\},$ which contains
the $k$-term
progression $2^k+1, 2^k+2, \dots, 2^k+k$ for all $k$. This set,
of course, is not
log-sparse. This raises the following question:
does there exist an integer $n$ and $n$ log-sparse sets $S_1, S_2,
\dots, S_n$ such that the sumset $S_1 + S_2 + \dots + S_n$ contains 
arbitrarily
long arithmetic progressions?

In this note we answer this question in the negative, by showing that
for all $n \ge 1$ and all log-sparse sets $S_1, S_2, \dots, S_n$,
the maximum possible size of an arithmetic progression in 
$S_1 + S_2 + \dots +
S_n$ is at most $n^{(1 + o(1)) n}$, where the $o(1)$ term tends to $0$
as $n \to \infty$ and is independent of the choice of the sets $S_1, S_2,
\dots, S_n$. We also establish a nearly matching lower bound, by proving
that for all $n$, there exist log-sparse sets $S_1, S_2, \dots, S_n$
whose sum contains an arithmetic progression of length 
at least $n^{(1-o(1)) n}$.

This question is also motivated by the following problem from the 2009
China Team Selection Test: Prove that the set $\{2^a+3^b: a, b \ge
0\}$ has no arithmetic progression of length $40$. Note that this
set can
be written as the sum of two log-sparse sets: the set of powers of $2$
and the set of powers of $3$, so a direct corollary of our upper bound
is that the longest arithmetic progression in $\{2^a+3^b: a, b \ge 0\}$
is bounded.

Throughout this note, $\log(n)$ always denotes $\log_2(n)$, and
$[n]$ denotes the set $\{1, 2, \dots, n\}$ of the first $n$
positive integers.

\section{The Upper Bound} 

In this section, we prove an upper bound on the size of the longest
arithmetic progression in the sumset of $n$ log-sparse sets.

\begin{theorem}
\label{thm:upper}
 Let $S_1, \dots S_n$ be $C$-log-sparse sets, for any fixed $C>0$, 
and let $T$ be any arithmetic progression 
in $S=S_1+\dots+S_n$. Then $|T|\leq n^{(1+O(\lg \lg n/\lg n))n}$.
\end{theorem}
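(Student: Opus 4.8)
The plan is to reduce, by rescaling, to the case where the arithmetic progression $T$ has common difference $1$, i.e. $T = \{a, a+1, \dots, a+L-1\}$ is an interval of integers of length $L$, and then bound $L$. Suppose for contradiction that $L$ is much larger than $n^{(1+o(1))n}$. Every element of $T$ has a representation $x_1 + \cdots + x_n$ with $x_i \in S_i$; the key observation is that since each $S_i$ is $C$-log-sparse, the number of elements of $S_i$ lying in $[1, M]$ is at most $C \lceil \log M \rceil = O(\log M)$. So if all of $T$ lay in $[1,M]$ with $M \le 2a + 2L$, the number of representable sums would be at most $(O(\log M))^n$, giving $L \le (O(\log(a+L)))^n$. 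This is already a bound, but it depends on $a$; the work is to remove the dependence on the (possibly enormous) starting point $a$.

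To do this I would localize the structure of the $S_i$ near the scale of $T$. Write $N = a + L$ for the top of the progression, so every summand satisfies $x_i \le N$. Partition each $S_i$ into its "high" part $S_i \cap (\theta L, N]$ and its "low" part $S_i \cap [1, \theta L]$ for a suitable threshold $\theta L$ (with $\theta$ a parameter to be optimized, perhaps $\theta = n$ or a small power of $L$). The low parts contribute a combined sum in $[0, n\theta L]$, a window of length only $O(n\theta L)$, and because each $S_i$ is log-sparse the low part of $S_i$ has only $O(\log(\theta L))$ elements, so the low parts of all $S_i$ together produce at most $(O(\log(\theta L)))^n$ distinct partial sums. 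The high summands, by contrast, are large: each lies in a dyadic range $[2^j, 2^{j+1})$ with $\log(\theta L) \le j \le \log N$, and log-sparsity allows at most $C$ of them per dyadic block. The crucial point is that consecutive integers in $T$ cannot both be hit using high summands from the same (fixed) multiset of dyadic blocks unless those summands' contributions can be adjusted by $\pm 1$ — but a change of $1$ in the total forces a corresponding change among the low summands, of which there are few. Thus one argues that within any window of length comparable to the spacing of achievable high-sums, only $(O(\log(\theta L)))^n$ consecutive integers of $T$ can be covered, and then sums over the at most $(O(\log N / \log(\theta L)))$-fold choices... — more carefully, one counts: the number of ways to choose the high summands is at most $\binom{O(\log N)}{\le n} \cdot C^n$ roughly, which is $(O(\log N))^{O(n)}$, and each such choice, together with the low summands, covers at most $O(n \theta L)$ integers, but with only $(O(\log(\theta L)))^n$ of them actually in the image; balancing $L \le (O(\log N))^{O(n)} \cdot (\text{something independent of } N)$ against the trivial $N \le a + L$ and feeding it back collapses the dependence on $a$.

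Concretely, the iteration I expect is: start from $L \le (O(\log N))^{cn}$ for an absolute constant $c$, where $N \le a+L \le 2a$; if $a$ is astronomically larger than any tower in $n$, one instead chooses the threshold $\theta L$ to be a function only of $L$ and $n$ (not $N$), so that the "low" sumset has size $(O(\log L + \log n))^n$ independent of $N$, while the "high" summands, being at least $\theta L$, number at most $N/(\theta L)$ in total across all blocks — but more usefully, distinct values of the high-sum differ by at least... the $\gcd$-type argument: the set of values obtainable from the high summands is itself a sumset of log-sparse sets with all elements $> \theta L$, so by induction on scale its elements in any interval of length $\theta L$ number at most $(O(\log(N/\theta L)))^n/\dots$ — one sets up a self-improving recursion $L \le g(n) \cdot (\log L)^{O(n)}$ which, having no $N$ on the right, solves to $L \le n^{(1+o(1))n}$ after bookkeeping the constants (each log-sparse set contributes a factor $C \log(\cdot)$, and there are $n$ of them, and the number of dyadic scales contributes another $\log$ that gets absorbed).

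The main obstacle is precisely this removal of the dependence on $a$: the naive counting gives $L \le (\log(a+L))^{O(n)}$, which is vacuous when $a$ is huge. Making the "low part / high part" split work requires showing that the high summands, which live at scales $\gg L$, can only be "fine-tuned" to within a large multiple of $L$, so that the residual freedom to land on consecutive integers of $T$ comes entirely from the low part, whose size is controlled independently of $a$. Getting the quantitative bound to come out as $n^{(1+o(1))n}$ rather than, say, $n^{O(n)}$ or $2^{O(n\log n)}$ with a worse constant will require choosing the dyadic threshold $\theta L$ optimally — roughly $\theta \approx 1/n$ or $\theta \approx$ a slowly growing function — and carefully accounting that $n$ log-sparse sets across $\approx \log L$ dyadic scales give a count like $\binom{\log L}{n}^{?} \approx (\log L / n)^{n} \cdot (\text{poly})$, which with $\log L \approx n \log n$ yields exactly $n^{(1+o(1))n}$.
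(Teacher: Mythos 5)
There is a genuine gap, and it sits exactly where you flagged the difficulty: handling the summands that are much larger than the diameter of the progression. You correctly observe that the naive count gives $L \le (O(\log(a+L)))^n$ and that the whole problem is to remove the dependence on the starting point $a$, and your low/high split at a threshold comparable to $L$ is the right first move (it matches the paper's classification of terms as ``medium'' versus ``large'', with ``large'' meaning exceeding $\Delta := \max_{y\in T} y - \min_{y\in T} y$). But your treatment of the high summands never closes. The counts you propose for them --- $\binom{O(\log N)}{\le n}\cdot C^n = (O(\log N))^{O(n)}$ dyadic-block choices, or ``at most $N/(\theta L)$ in total'' --- still depend on $N$ and hence on $a$, and the ``induction on scale'' / ``self-improving recursion with no $N$ on the right'' is asserted but not constructed; nothing in the sketch produces an $a$-free bound. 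The paper's resolution is a short but essential ordering argument: list the large terms of a representation in decreasing order $x_1 \ge \cdots \ge x_a$ and reveal them one at a time; if $M = \max_{y\in T} y - (x_1+\cdots+x_{i-1})$ is the remaining budget, then $x_i \le M$ on one side, while on the other side $x_1+\cdots+x_{i-1}+n\cdot x_i \ge \min_{y\in T} y$ together with $x_i \ge \Delta$ forces $(n+1)x_i \ge M$. So each large term is pinned to $S_i \cap [M/(n+1), M]$, a multiplicative window of width $n+1$, hence has only $O(\log n)$ admissible values by log-sparseness --- independent of $a$. This is the missing idea, and without it (or a substitute) the proof does not go through.

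A secondary issue: the opening reduction ``by rescaling'' to common difference $1$ is not valid, since the summands $x_i$ need not be divisible by the common difference $\delta$. The paper instead keeps $\delta$ and introduces a third class of ``small'' terms $x_i < \delta/2n$: these sum to less than $\delta/2$, so they can be discarded entirely, because the large and medium terms already determine which element of $T$ is being represented. With that fix, the medium terms lie in $[\delta/2n, \Delta]$, a multiplicative range of $2n(|T|-1)$, contributing $(O(\log n + \log|T|))^n$ choices; combining with $3^n$ (for the type of each index) and $(O(n\log n))^a$ (for the large terms) gives $|T| \le O(n\log n + \log|T|)^n$, which self-improves to the stated bound --- this last bootstrapping step being essentially the bookkeeping you anticipated at the end of your sketch.
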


\begin{proof}
For any $x\in T$, we fix a representation $x=x_1+x_2+\dots+x_n$. 
We will bound the number of elements in $T$ by finding an efficient 
encoding for an arbitrary $x\in T$. To this end, 
let $\Delta:= \max_{y\in T} y - \min_{y\in T} y $ 
and let $\delta$ be the step-length in $T$ 
(i.e. $\delta:=\Delta/(|T|-1)$). For the fixed 
representation $x=x_1+\dots+x_n$ for any $x\in T$, we say 
that $x_i$ is large if $x_i>\Delta$, small if $x_i<\delta/2n$, 
and medium otherwise. Observe that as the sum of of all small terms 
is less than $\delta/2$, $x\in T$ is uniquely determined by the 
values of all its large and medium terms.

We can encode an arbitrary $x\in T$ as follows. First we choose 
which terms are large, medium and small. There are at most 
$3^n$ choices for this. Let $a, b$ and $c$ denote the chosen 
number of terms of each respective type.

For the $a$ large terms, we first choose their internal order from 
largest to smallest, and then choose the value of each of these 
terms in decreasing order. We claim that having fixed the order, 
there are at most $O(\log n)$ choices for each term. To see this, 
we may, without loss of generality, assume that the large 
terms and internal order are given by $x_1 \geq x_2 \geq \dots \geq x_a$. 
Having already chosen $x_1, \dots, x_{i-1}$ where $i\leq a$, 
we let $$M:=\max_{y\in T} y - x_1 + \dots + x_{i-1}.$$ Clearly we 
must choose $x_i \leq M$. On the other hand, we must also have 
$$x_1+\dots+x_{i-1} + n\cdot x_i \geq \min_{y\in T} y.$$ 
Rewriting this, using the definition of $\Delta$, we get 
$n\cdot x_i + \Delta \geq M$. Since $x_i\geq \Delta$ 
we can conclude that $(n+1)x_i \geq M$ and so any valid 
choice for $x_i$ is contained in $S_i \cap [M/(n+1), M]$. 
Thus by log-sparseness there are at most $O(\log n)$ options, 
as desired. So in total, we have $O(n \log n)^a$ choices for the 
large terms.

For each medium term $x_i$, we know that it is contained in 
$S_i \cap [\delta/2n, \Delta]$, where the lower and upper bounds 
differ by a factor $2n(|T|-1)$. Thus again by log-sparseness, 
there are at most $O(\log n + \log |T|)$ options for each. 
So in total $O(\log n + \log |T|)^b$ possibilities.

Combining this, we conclude that $$|T| \leq 3^{n} \cdot 
O\left(\max\left( n \log n, \log n + \log |T|\right)\right)^n 
= O\left( n \log n + \log |T|\right)^n.$$ But this cannot 
hold if $|T|$ is too large. Assuming $|T|=(n f(n))^n$ 
where $f(n)\geq 1$ yields $f(n) \leq O\left( \log n 
+ \log f(n)\right)$, which implies that $f(n)=O(\log n)$, 
or $$|T|\leq n^{n(1+\lg\lg n/\lg n + O(1/\lg n))},$$ as desired.
\end{proof}

\section{The Lower Bound}

In this section we provide a probabilistic construction of $n$ log-sparse
sets whose sumset contains an arithmetic progression of 
length $n^{(1-o(1))n}.$

\begin{theorem}
\label{thm:lower}
For any $\eps>0$, there is some positive $n_0=n_0(\eps)$ so that for all
$n \ge n_0(\eps)$, there exists log-sparse $S_i$ for $1 \le i \le n$
so that the sumset $S=S_1+S_2+ \cdots +S_n$ contains 
an arithmetic progression
of length at least $n^{(1-\eps)^2 n}$.
\end{theorem}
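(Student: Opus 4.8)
The plan is to construct the sets $S_i$ probabilistically so that a long arithmetic progression, say $\{0, \delta, 2\delta, \dots, (L-1)\delta\}$ with a suitable common difference $\delta$, is entirely contained in $S_1 + \dots + S_n$. A natural idea is to have each $S_i$ ``cover'' a different scale: partition the digit positions of a base-$b$ (for appropriate $b$) representation into $n$ blocks, let $S_i$ consist of numbers whose nonzero digits lie only in the $i$-th block, and then every integer in some large range has a unique representation as a sum $x_1 + \dots + x_n$ with $x_i \in S_i$. The problem is that the deterministic ``block'' set $S_i$ is far from log-sparse: within a dyadic interval it contains roughly $b$ elements, and $b$ will need to grow with $n$. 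So instead I would take each $S_i$ to be a \emph{random} sparse subset, keeping each candidate element independently with a small probability, and show that with positive probability (i) every $S_i$ is $C$-log-sparse for an absolute constant $C$, and (ii) every one of the $L$ target progression elements still has \emph{some} valid representation.

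\smallskip

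Concretely, I would proceed as follows. First, fix parameters: let $b = b(n)$ be the number of candidate ``digit values'' in each block, let $k = k(n)$ be the number of digit positions (scales) assigned to each of the $n$ sets, and choose the overall radix so that a sum $x_1 + \dots + x_n$ with $x_i$ drawn from block $i$ ranges over all integers in an interval of length roughly $b^{kn}$; this interval will be the arithmetic progression (with $\delta = 1$, or with a common difference to boost the count). Aiming for $L \approx n^{(1-\eps)n}$ forces $b^{kn} \gtrsim n^{(1-\eps)n}$, i.e. $k \log b \gtrsim (1-\eps)\log n$. Second, thin each block: the full block-$i$ set has about $b$ elements per dyadic window, so I retain each element independently with probability $p \approx C/b$; then $|S_i \cap [x,2x)|$ is a sum of $\sim b$ independent indicators with mean $\sim C$, and a Chernoff bound plus a union over the $O(kn \log b)$ relevant dyadic windows shows all $S_i$ are $C$-log-sparse with probability $\ge 1 - o(1)$, provided $C$ is a large enough absolute constant. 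Third — the crux — for a fixed target integer $t$ in the progression, count the number of representations $t = x_1 + \dots + x_n$ with $x_i$ in the \emph{full} block-$i$ set: by the uniqueness/overlap structure of the digit blocks this number is at least some $R = R(n)$ (if the blocks overlap slightly, carrying gives many representations; this is where the construction must be engineered to make $R$ large). Each such representation survives in the thinned sets with probability $p^n \approx (C/b)^n$, and the survival events for \emph{distinct} representations are positively enough behaved (or one can use the second moment / Lovász Local Lemma) that $\Pr[t \text{ has no surviving representation}] \le (1 - p^n)^{R} \approx \exp(-R (C/b)^n)$. Taking a union bound over the $L$ values of $t$ requires $R (C/b)^n \gg \log L \approx n \log n$, i.e. the number of raw representations $R$ must exceed roughly $(b/C)^n \cdot n \log n$.

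\smallskip

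Balancing these constraints is the heart of the argument and the step I expect to be the main obstacle. We need simultaneously: (a) $k \log b \gtrsim (1-\eps) \log n$ so the progression is long enough; (b) $b$ not too large, since the thinning kills a factor $(C/b)^n$ and this must be recovered by the number of representations $R$; and (c) a block structure that actually \emph{produces} $R \ge (b/C)^n \cdot \mathrm{poly}(n)$ representations of each target — this is the delicate combinatorial design question, since naive non-overlapping blocks give $R=1$. The resolution is to let each of the $n$ sets correspond to a batch of $k$ consecutive scales and allow controlled overlaps between consecutive batches so that carries redistribute digits: an integer has exponentially (in the number of overlap positions) many base-$b$-with-carries decompositions. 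If we arrange $m$ overlap positions, each contributing a factor of order $b$ to the representation count while only modestly shrinking the usable range, we get $R \approx b^{m}$, and choosing $m \approx n + o(n)$ makes $R(C/b)^n = C^n b^{m-n}$ as large as we like. Once the parameters are tuned — I would take $b$ a slowly growing function of $n$, say $b = (\log n)^{\Theta(1)}$ or even $b = n^{\Theta(\eps)}$, and $k$ correspondingly so that $k\log b = (1-\eps)\log n$ — the length bound $L \ge b^{kn - O(n)} = n^{(1-\eps)n - o(n)} \ge n^{(1-\eps)^2 n}$ follows for $n \ge n_0(\eps)$, and the union bounds in steps two and three each fail with probability $o(1)$, so a valid configuration exists.
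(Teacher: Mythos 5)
There is a fatal flaw in your proposal that appears before any of the probabilistic analysis: the scale structure of your sets is incompatible with log-sparseness. You confine each $S_i$ to a single batch of $k$ consecutive base-$b$ digit positions, so all nonzero elements of $S_i$ lie in an interval of the form $[b^{(i-1)k}, b^{ik})$, which is covered by $k\log b \approx (1-\eps)\log n$ dyadic windows. A $C$-log-sparse set supported there has at most $Ck\log b + 1 = O(\log n)$ elements, hence $|S_1+\cdots+S_n| \le \prod_i |S_i| = O(\log n)^n = n^{o(n)}$, which is far smaller than the target length $n^{(1-\eps)n}$. This counting obstruction rules out the design outright: no choice of thinning probability, overlap pattern, or representation-counting argument can rescue it, because the issue is the cardinality of the sumset, not the success probability. (Two further problems would remain even if this were fixed: the bound $\Pr[t \text{ has no surviving representation}] \le (1-p^n)^R$ goes the wrong way when survival events are positively correlated --- positive correlation makes $\Pr[\text{none survives}] \ge (1-p^n)^R$, so you would need Janson's inequality with control of the pair term; and within a batch of $k>1$ positions the number of candidates per dyadic window ranges from $\sim b$ up to $\sim b^{k-1}$ depending on the scale, so a single uniform thinning probability cannot make every window have $O(1)$ survivors while retaining enough elements at the low scales.)

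The necessary idea, which the paper uses, is that each $S_i$ must contain elements at \emph{many widely separated scales} rather than being confined to one band: since a log-sparse set has at most $C\log N$ elements below $N$, and each set needs roughly $n^{1-\eps}$ elements for the product to reach $n^{(1-\eps)n}$, its elements must spread over $\gtrsim n^{1-\eps}$ dyadic windows. Concretely, the paper splits the bit positions $[0,(1-\eps)^2 n\log n)$ into $m=(1-\eps)n$ blocks of length $(1-\eps)\log n$, and each $S_j$ contains $0$ together with \emph{one} uniformly random element from each block's value set $B_i$ (of size $n^{1-\eps}-1$). This makes $S_j$ log-sparse with $C=2$ (one element per block, blocks separated in scale) while giving it $\sim n$ elements. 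A target $a$ is then decomposed by its binary expansion into $x_1+\cdots+x_m$ with $x_i\in B_i\cup\{0\}$, and one only needs a system of distinct representatives assigning each nonzero $x_i$ to some $S_j$ containing it; the $\eps n$ surplus of sets over blocks makes Hall's condition hold for all $a$ simultaneously after a union bound. Your proposal does not contain this scale-spreading idea, and without it the theorem is out of reach.
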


\begin{proof}

Begin by splitting the integers from $0$ to $(1-\eps)^2
n \log n-1$ into $(1-\eps) n$ blocks of $(1-\eps) \log n$
consecutive integers.  Denote the blocks as $b_1, \dots, b_m,$
where $m = (1-\eps) n,$ so \[b_i = \{(i-1)(1-\eps)\log n,
(i-1)(1-\eps)\log n + 1, \dots, i(1-\eps) \log n-1\}.\]
For each $i \le m$, let $B_i$ be the set of all positive integers
which are sums of distinct powers of $2$ with exponents in $b_i$.
Then, $|B_i| = 2^{(1-\eps) \log n}-1 = n^{1-\eps}-1.$  Furthermore,
every integer from $0$ to $2^{(1-\eps)^2 n \log n} -1 = n^{(1-\eps)^2
n}-1$ can be uniquely written as the sum of at most one element from
each $B_i,$ by just looking at the integer's binary representation
and splitting it into blocks of size $(1-\eps) \log n.$

We first create sets $S_1, \dots, S_n$, each of size $m+1 = (1-\eps) n
+ 1.$ For each $1 \le i \le m$ and each $1 \le j \le n,$ we uniformly
at random choose one element in $B_i$ to be in $S_j$. Also, allow
each $S_j$ to contain $0$.	This is not important since at the end
we can shift all the elements of each $S_i$ up by $1$, and clearly
there are at most $2$ elements in $[x, n^{1-\eps} x) \supset [x, 2x)$
for each integer $x$, both before and after the shift. Therefore,
we have that each $S_i$ is log-sparse.

We show that with positive probability, $[0, 2^{(1-\eps)^2 n \log n})
\subset S,$ which clearly concludes the proof. For an integer $0
\le a < 2^{(1-\eps)^2 n \log n}$, write $a$ as $x_1+\dots+x_m,$
where $x_i \in B_i \cup \{0\}$. Consider a bipartite 
graph $G$ with nodes $x_1,
\dots, x_m$ and $S_1, \dots, S_n$ such that there is an edge from
$x_i$ to $S_j$ if and only if $x_i \in S_j.$ Then, suppose that
for any $k \le m$ and $1 \le i_1 < \dots < i_k \le m$, there exist
$k$ integers $1 \le j_1 < \dots < j_k \le n$ such that $S_{j_r}$
contains some $x_{i_t}$ for all $r \le k$. This implies that for any
subset $\{x_{i_1}, \dots, x_{i_k}\},$ the total number of $S_j$'s
that some $x_{i_t}$ is connected to in $G$ is at least $k$. Therefore,
by Hall's marriage theorem, there is some matching from $x_1, \dots,
x_m$ to $S_1, \dots, S_n$, i.e., there is a permutation $\sigma:
[n] \to [n]$ such that $x_i \in S_{\sigma(i)}$ for all $i \le m$,
and thus, $a = x_1 + \dots + x_n \in S_1 + \dots + S_n.$

Therefore, it suffices to show that the probability of there existing some
$1 \le k \le m$, some subset $\{B_{i_1}, \dots, B_{i_k}\} \subset
\{B_1, \dots, B_m\}$, some $x_{i_1} \in B_{i_1}, \dots, x_{i_k} \in
B_{i_k}$, and some $\{S_{j_1}, \dots, S_{j_{n-k+1}}\} \subset \{S_1,
\dots, S_n\}$ such that no $x_{i_t}$ is contained in any $S_{j_r}$,
is less than $1$.  This follows from the union bound.  We can
upper bound the probability by at most
\[ \sum\limits_{k = 1}^m {m \choose k} \cdot (n^{1-\eps})^k \cdot {n
\choose n-k+1} \cdot \left(1 - \frac{1}{n^{1-\eps}}\right)^{k(n-k+1)} \]
The ${m \choose k}$ comes from choosing the subset
$\{B_{i_1}, \dots, B_{i_k}\}$, the $(n^{1-\eps})^k$ comes
from choosing each $x_{i_r},$ the ${n \choose n-k+1}$
comes from choosing the $S_{j_t}$'s and the $\left(1 -
\frac{1}{n^{1-\eps}}\right)^{k(n-k+1)}$ is the probability that
every $S_{j_r}$ does not contain any $x_{i_t}.$

Now, using the fact that ${m \choose k} \le m^k \le n^k$ and ${n
\choose n-k+1} \le n^{k-1} \le n^k,$ this sum is at most
\[\sum\limits_{k = 1}^{m} \left(n^2 \cdot n^{1-\eps} \cdot \left(1 -
\frac{1}{n^{1-\eps}}\right)^{n-k+1}\right)^{k}.\]
But since $k \le (1-\eps)n,$ we know that $n-k+1 \ge \eps n,$
so this sum is at most
\begin{align*} \sum\limits_{k = 1}^{m} \left(n^2 \cdot n^{1-\eps}
\cdot \left(1 - \frac{1}{n^{1-\eps}}\right)^{\eps n}\right)^{k} &\le
\sum\limits_{k = 1}^{m} \left(n^2 \cdot n^{1-\eps} \cdot e^{-\eps
n^{\eps}}\right)^{k} \\ &\le \sum\limits_{k = 1}^\infty \left(n^3 \cdot
e^{-\eps n^{\eps}}\right)^k < 1, \end{align*}
assuming $n$ is sufficiently large.  This concludes the proof.
\end{proof}

\section{Explicit construction}

The proof of the lower bound above is probabilistic. It is not
difficult to derandomize this proof and give an explicit 
construction containing a progression of length
$2^{\Omega (n \log n)}$ using quadratic polynomials
over a finite field. The construction is described
in what follows.  It is possible to use other 
known explicit bipartite graphs known as
condensers to get similar constructions,
but the one below is probably the simplest to describe.
See, e.g., \cite{TU} and its references for some more sophisticated
constructions of condensers.

Let $F=F_q$ be the finite field of size $q$. Define a bipartite
graph
$G=G_q$ with classes of vertices $A$ and $B$ as follows.
$A=F \times F$ is simply the cartesian product of $F$ with itself.
$B$ is the disjoint union of $q^2$ sets $B_{a,b}$ with $a,b \in F$.
Each set $B_{a,b}$ consists of the $q$ polynomials
$P_{a,b,c}(x)=ax^2+bx+c$  where $c$ ranges over all elements of
$F$.
Each vertex $P=P_{a,b,c} \in B$ is connected to all vertices
$(x,P(x))\in A$. Therefore, the degree of each vertex in $B$ is
exactly
$q$. Note that for every fixed $a,b$, the sets of neighbors
of the $q$ vertices $P_{a,b,c}$ as $c$ ranges over all elements of
$F$ are pairwise disjoint, and each
vertex of $A$ is connected to exactly one of them.
\begin{proposition}
\label{p12}
Let $G=G_q$, $A$ and $B$ be as above. Then for
every $x \leq q^2/4$, every set of at most $x$ vertices
of $B$ has at least $x$ neighbors in $A$. Therefore, for each such
subset of $x$ vertices in $B$ there is a matching in $G$ saturating it, i.e., each vertex in the subset of $B$ is matched.
\end{proposition}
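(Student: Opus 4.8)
I would prove the statement in its Hall form: for every nonempty $S\subseteq B$ with $s:=|S|\le q^2/4$ one has $|N(S)|\ge s$, after which Hall's marriage theorem immediately gives the saturating matching. So fix such an $S$, set $U:=N(S)\subseteq A$ and $u:=|U|$; the goal is $u\ge s$.

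The argument rests on two structural facts about $G_q$. First, every vertex of $B$ has degree exactly $q$, since $P=P_{a,b,c}$ is joined precisely to the $q$ points $(x,P(x))$, $x\in F$. Second, any two distinct vertices $P,P'\in B$ have at most two common neighbors in $A$: a common neighbor $(x,P(x))=(x,P'(x))$ forces $x$ to be a root of $P-P'$, a nonzero polynomial of degree at most $2$. Now I would double-count ``cherries'', i.e.\ triples $(\{P,P'\},v)$ with $P\ne P'$ in $S$ and $v\in U$ adjacent to both. Counting by the pair, the second fact gives at most $2\binom{s}{2}=s(s-1)$ such cherries. Counting by the center $v$, and writing $d(v)$ for the number of neighbors of $v$ inside $S$, the count is $\sum_{v\in U}\binom{d(v)}{2}$; moreover $\sum_{v\in U}d(v)=qs$, since each of the $s$ vertices of $S$ sends all $q$ of its edges into $U$.

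Since $t\mapsto t(t-1)/2$ is convex on $\mathbb{R}$, Jensen's inequality (equivalently Cauchy--Schwarz) gives $\sum_{v\in U}\binom{d(v)}{2}\ge u\binom{qs/u}{2}=\tfrac12(q^2s^2/u-qs)$. Combining with the cherry upper bound $s(s-1)$ and solving for $u$ yields $u\ge \dfrac{q^2 s}{2s+q-2}$. Finally, $\dfrac{q^2 s}{2s+q-2}\ge s$ is equivalent to $2s\le q^2-q+2$, and this is comfortably implied by $s\le q^2/4$ for every $q\ge 2$ (indeed $q^2-q+2-q^2/2=(q-1)^2/2+3/2>0$ after scaling), so $u\ge s$ as required. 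The edge case $s=1$ needs no separate treatment: the cherry count is then $0$ and the same computation returns $u\ge q\ge 1$.

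I do not anticipate a serious obstacle here; the only points requiring care are reading the statement as the Hall condition $|N(S)|\ge|S|$ on sets of size at most $q^2/4$ (rather than literally ``at least $x$ neighbours'' for all subsets of size up to $x$, which is false for singletons), and checking that the convexity bound is applied to the real extension of $t(t-1)/2$ so that the $d(v)$ need not be assumed equal.
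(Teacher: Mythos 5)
Your proof is correct, and it takes a genuinely different route from the paper's. Both arguments hinge on the same key fact (two distinct quadratics agree in at most two points, so any two vertices of $B$ have codegree at most $2$ in $A$), but the paper exploits it greedily: adding the vertices of $X\subseteq B$ one at a time, the $i$-th vertex contributes at least $q-2(i-1)$ new neighbors, giving $|N(X)|\ge x(q-x+1)$ for $x\le (q+1)/2$, and sets larger than $(q+1)/2$ are then handled by monotonicity of neighborhoods (a set of size $\lfloor (q+1)/2\rfloor$ already has more than $q^2/4$ neighbors). You instead run the standard second-moment argument: double-count cherries centered in $N(S)$ and apply convexity to get $|N(S)|\ge q^2s/(2s+q-2)$, which yields the Hall condition for all $s\le q^2/4$ in one uniform computation, with no case split. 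Your approach is slightly less elementary (Jensen/Cauchy--Schwarz versus a telescoping sum) but arguably cleaner, and it actually proves the expansion property for all $s\le (q^2-q+2)/2$; your computations check out (in particular $u\ge q^2s/(2s+q-2)\ge s$ reduces to $2s\le q^2-q+2$, implied by $s\le q^2/4$). You are also right that the proposition must be read as the Hall condition $|N(S)|\ge |S|$ for $|S|\le q^2/4$ --- the literal phrasing ``at least $x$ neighbors'' for sets of ``at most $x$ vertices'' would fail for singletons --- and this is exactly what the paper's proof establishes and what the matching conclusion requires.
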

\begin{proof}
Every two vertices of $B$ have at  most $2$ common neighbors in
$A$, since
any two distinct quadratic polynomials can be equal on at most $2$
points. Therefore, if $x \leq (q+1)/2 $
then for every set $X \subset B$
of size $|X|=x$, the number of its neighbors in $A$ is at least
$$
q+(q-2)+(q-4)+ \ldots +(q-2x+2) =x(q-x+1).
$$
This is (much) larger than $x$ for all $x \leq (q+1)/2$. For
$x=\lfloor (q+1)/2 \rfloor$ this number exceeds $q^2/4$, implying
that every set of at least $\lfloor (q+1)/2 \rfloor$ vertices
of $B$ has more than $q^2/4$ neighbors, completing the proof.
\end{proof}

Returning to our sumset problem, put $n=q^2$.
Split the integers in
$[0,q^2 \log q/4)$ into $q^2/4$ blocks, each of size
$\log_2 q$. Each set $S_i$ contains, as in the probabilistic proof, 
the integer $0$ and one sum of the powers of $2$ corresponding
to each block. The assignment is determined by the 
induced subgraph of the graph
$G_q$ described above on the classes of vertices $A$ and
the union of some $q^2/4$ subsets $B_{a,b}$.
The proposition ensures that
$S_1+\ldots +S_n$ contains all integers from $0$ to
$2^{q^2 \log q/4}=2^{n \log n /8}$.
\vspace{0.2cm}

\noindent
{\bf Acknowledgement:}\, We thank Christian Elsholtz for helpful comments.


\begin{thebibliography}{99}

\bibitem{Bo}
J. Bourgain, On arithmetic progressions in sums of sets of integers, 
in: {\it A Tribute to Paul Erd\H{o}s},
Cambridge Univ. Press, Cambridge (1990), 105-110.

\bibitem{CLS}
E. Croot, I. \L{}aba, and O. Sisask, Arithmetic progressions in 
sumsets and $L_p$-almost-periodicity, 
{\it Combin. Probab. Comput.} {\bf 22(3)} (2013), 351-365.

\bibitem{Gr}
R. Graham,
{\it Arithmetic Progressions: from Hilbert to Shelah}, 
American Mathematical Society,
Providence, RI, 1989.

\bibitem{Green}
B. Green, Arithmetic progressions in subsets, 
{\it Geom. Funct. Anal.} {\bf 12} (2002), 584-597.

\bibitem{GT}
B. Green and T. Tao,
The primes contain arbitrarily long arithmetic progressions,
{\it Ann. Math.} {\bf 167(2)} (2008), 481-547.

\bibitem{Hi}
D. Hilbert,
Beweis f\"ur die Darstellbarkeit der ganzen 
Zahlen durch eine feste Anzahl $n^{ter}$ Potenzen (Waringsches Problem),
{\it Math. Ann.} {\bf 67} (1909), 281-300.

\bibitem{Sz}
E. Szemer\'edi,
On sets of integers containing
 no \textit{k} elements in arithmetic progression,
{\it Acta Arith.} {\bf 27} (1975), 199-245.

\bibitem{TU}
A. Ta-Shma and C. Umans,
Better condensers and new extractors from Parvaresh-Vardy codes,
{\it Conference on Computational Complexity} (2012),
309-315.

\end{thebibliography}
\end{document}